\theoremstyle{plain}
\newtheorem{theorem}{Theorem}[section]
\theoremstyle{definition}
\newtheorem{definition}[theorem]{Definition}
\newtheorem{example}[theorem]{Example}
\newtheorem{question}[theorem]{Question}
\theoremstyle{remark}
\newtheorem{remark}[theorem]{Remark}
\newcommand{\Aut}{\operatorname{Aut}}
\newcommand{\G}{\mathcal{G}}
\newcommand{\Cc}{{\frak G}}
\newcommand{\F}{{\mathcal F}}
\title[Faithful actions on DGA and the group isomorphism problem]{Faithful actions on Differential Graded Algebras determine the isomorphism type of a large class of groups}
\author{Cristina Costoya \and Antonio Viruel
}
\thanks{First author is partially  supported by MICINN grant
MTM 2009-14464-C02 (European FEDER support included), and by Xunta de Galicia grant Incite 09 207 215
PR. Second author is partially supported by MICINN grant MTM2010-18089 (European FEDER support included), and JA grants FQM-213 and P07-FQM-2863}
\address{\small \rm  Dep.\ de Computaci\'on, \'Alxebra,
Universidade da Coru{\~n}a, Campus de Elvi{\~n}a, s/n, 15071 A Coru{\~n}a, Spain.}
\email{cristina.costoya@udc.es}
\address{\small \rm  Dep.\ de \'Algebra, Geometr{\'\i}a y Topolog{\'\i}a,
Campus de Teatinos, Universidad de M\'alaga,
29071 M\'alaga, Spain}
\email{viruel@agt.cie.uma.es}
\begin{document}

\begin{abstract}
We prove that  the isomorphism type of a large class of groups (containing finite groups, countable Artinian groups and mapping class groups of certain surfaces, among others) is determined by the set of differential graded $\mathbb Q$-algebras on which these groups act faithfully. 
\end{abstract}

\maketitle

\section{Introduction}
In a category ${\mathcal C}$,  the Isomorphism Problem consists on providing a procedure that determines whether two objects in ${\mathcal C}$ are isomorphic or not.  If such a procedure exists and the techniques fit into some theory $\mathcal T$, it is said that
 {$\mathcal T$ tells objects in ${\mathcal C}$ apart}.
Well known examples of this problem are the Dehn's Finite Presentation Problem \cite{Dehn} or
the Graph Isomorphism Problem \cite{RC, Gati}. 
In this paper we consider
the Group Isomorphism Problem and the following question.
\begin{question}\label{q2} 
Does {Representation Theory} tell groups
apart?
\end{question}
The answer clearly depends on what is
meant by {Representation Theory} and how it is used to compare groups.
If  {Representation Theory}  is  just considered as
linear representation of groups, where two groups are compared
by looking at their set of modules, then  Question
\ref{q2} is settled in the negative.  In fact, in a remarkable paper of Hertweck \cite{Hertweck},  
two non isomorphic finite groups $G$ and $H$ are constructed, both of size
$2^{21}97^{28}$,  such that ${\mathbb Z}[G]\cong {\mathbb Z}[H]$ as
rings. Therefore, the isomorphism type of a  finite group cannot be decided by enumerating the set of vector spaces on which that group acts faithfully.

In this paper, we approach Representation Theory  in a broader way
by considering actions on
differential graded algebras over $\mathbb Q$, which will be abbreviated as DGA's.  In that context and for an appropriate class $\Cc$  of groups (containing finite groups, countable Artinian groups, mapping class groups of certain surfaces ...) we settle Question \ref{q2}  in the positive.  For convenience of the reader, the class $\Cc$  will be introduced in the next section (see Definition \ref{def:C}). The main result in this paper is the following.
\begin{theorem}\label{main} 
For $G$ and $H$ groups in  $\Cc$, the following statements are equivalent:
\begin{itemize}
\item[i)] $G$ and $H$ are isomorphic.
\item[ii)] For every { \rm DGA}, $(A,d)$, $G\leq\operatorname{Aut}(A,d)$ if and
only if  $H\leq\operatorname{Aut}(A,d)$, where $\operatorname{Aut}(A,d)$ denotes the automorphism group of $(A,d)$.
\end{itemize}%
\end{theorem}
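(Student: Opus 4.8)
The plan is to prove the two implications separately, with the forward one essentially formal and the reverse one carrying all the weight.

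For i)$\Rightarrow$ii) I would argue directly from the definitions. If $\theta\colon G\to H$ is an isomorphism, then for any \textrm{DGA} $(A,d)$ an embedding $G\hookrightarrow\operatorname{Aut}(A,d)$ precomposed with $\theta^{-1}$ yields an embedding $H\hookrightarrow\operatorname{Aut}(A,d)$, and conversely; hence the conditions ``$G\leq\operatorname{Aut}(A,d)$'' and ``$H\leq\operatorname{Aut}(A,d)$'' coincide for every $(A,d)$. This step uses no structure of the class $\Cc$ at all.

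For ii)$\Rightarrow$i) the engine is the realization property that I expect to be built into Definition \ref{def:C}, namely that each group in $\Cc$ occurs as the \emph{full} automorphism group of some \textrm{DGA}. Choosing DGAs $(X_G,d)$ and $(X_H,d)$ with $\operatorname{Aut}(X_G,d)\cong G$ and $\operatorname{Aut}(X_H,d)\cong H$, I would reason as follows. Since $G\leq\operatorname{Aut}(X_G,d)$ tautologically, hypothesis ii) forces $H\leq\operatorname{Aut}(X_G,d)\cong G$; symmetrically $G\leq\operatorname{Aut}(X_H,d)\cong H$. This produces injective homomorphisms $\alpha\colon H\hookrightarrow G$ and $\beta\colon G\hookrightarrow H$. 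It is essential that the realization be \emph{exact} (the full automorphism group equal to $G$, not merely containing it), so that membership in $\operatorname{Aut}(X_G,d)$ translates into an actual embedding into $G$.

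It then remains to upgrade this pair of mutual embeddings to an isomorphism, i.e.\ to invoke a group-theoretic Schr\"oder--Bernstein statement. This fails for general groups, so here I would use the second property that I expect the class $\Cc$ to enjoy, that its members are co-Hopfian. Concretely, $\beta\circ\alpha\colon H\to H$ is an injective endomorphism; as $H$ is co-Hopfian it is surjective, whence $\beta$ is onto, and being also injective it is an isomorphism $G\cong H$. The three sample families are precisely cases where co-Hopficity is available: it is automatic for finite groups, and for countable Artinian groups it follows from the descending chain condition applied to $H\supseteq\beta\alpha(H)\supseteq(\beta\alpha)^2(H)\supseteq\cdots$, which stabilizes and forces $\beta\alpha$ to be onto.

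The genuinely hard part is not the argument above but the realization result underpinning it: producing, for every $G\in\Cc$, a \textrm{DGA} whose automorphism group is \emph{exactly} $G$. I expect this to be a Frucht-type universality construction (encoding $G$ into the minimal Sullivan model of a suitable rational space) established separately in the paper. Modulo that input and the co-Hopfian hypothesis on $\Cc$, the proof of the theorem itself reduces to the two short steps sketched here.
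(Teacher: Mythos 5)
Your overall architecture — mutual embeddings obtained from realization, then a Schr\"oder--Bernstein step via co-Hopficity — is exactly the paper's, and your endgame (an injective endomorphism of a co-Hopfian group is an automorphism) is the argument used there. But the input you lean on is not the one the paper supplies, and this is where the genuine gap lies. You assume each $G\in\Cc$ is the \emph{full} automorphism group of some DGA, $\Aut(X_G,d)\cong G$, and you correctly flag that your proof stands or falls with this. The paper does not prove this, and its construction does not give it: Theorem \ref{2} produces from a connected locally finite graph $\G$ with $\Aut(\G)\cong G$ a DGA $(A_G,d)$ with $\Aut(A_G,d)\cong K_1\rtimes G$, where $K_1$ is a generally nontrivial abelian torsion-free group (the automorphisms perturbing $z$ and $z_v$ by exact terms). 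So from $G\leq\Aut(A_G,d)$ and hypothesis ii) you only get $H\leq K_1\rtimes G$, not $H\leq G$, and your proposal contains no mechanism for killing the $K_1$-component.

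That mechanism is precisely the third condition in Definition \ref{def:C}, which your sketch never uses: every abelian normal subgroup of a group in $\Cc$ is torsion. Since $K_1$ is abelian and normal in $K_1\rtimes G$, the intersection $H\cap K_1$ is an abelian normal subgroup of $H$, hence torsion; but $K_1$ is torsion-free, so $H\cap K_1=1$ and the projection $K_1\rtimes G\to G$ embeds $H$ into $G$. With that step restored (and the symmetric one giving $G\leq H$), your co-Hopfian conclusion goes through verbatim. In short: your reduction is right in shape, but the ``exact realization'' you posit is unavailable, and the actual proof needs the torsion/torsion-free interplay you omitted — which is also why condition iii) appears in the definition of $\Cc$ at all.
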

Next section provides the necessary tools for the construction of  the class $\Cc$ in Theorem \ref{main} (see Definition \ref{def:C}) and also, a list of examples to illustrate it. The proof of this theorem will be postponed to Section \ref{sec:main}. 
\section{The class $\Cc$ of groups}\label{sec:C}

The first step towards the understanding of $\Cc$ is to think of groups as the automorphisms group of graphs.  Following \cite{Hell-N}, a simple and undirected graph $\G$ is a pair $(V,E)$ where $V$ is the set of vertices of $\G$ and $E$ is a set of $2$-elements subsets of $V$, called the set of edges of $\G$. Given $v\in V$,  the degree of $v$ is defined by $d(v,\G)=|\{w : [v,w]\in E\}|$. The graph $\G$ is said locally finite if $d(v,\G)<\infty$ for every $v\in V$.  Now, given two graphs $\G_i=(V_i,E_i)$, $i=1,2$ and $f\colon V_1\to V_2$ a mapping,  $f$ is called a homomorphism of $\G_1$ into $\G_2$ if $[f(v),f(w)]\in E_2$ whenever $[v,w]\in E_1$; the automorphisms are defined in the usual way.

In general, any group can be represented as group of automorphisms of a graph \cite{Sabidusi}, that is, for every group $G$ there exists a graph $\G$ such that $\Aut(\G)\cong G$. Unfortunately, if restrictions on the finitude of the degree of vertices are required, there exist groups that cannot be represented as groups of automorphisms of  locally finite graphs \cite[p.\ 250]{Hell-N}.  In that sense, the best possible result  is the following.

\begin{theorem}\cite[Theorem 4]{Hell-N}\label{th:groups&graphs}
Let $G$ be a group of countable order, then there exists a connected locally finite graph $\G$ such that $\Aut(\G)\cong G$. If $G$ is finite, $\G$ can also be chosen to be finite.
\end{theorem}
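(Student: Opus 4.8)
The plan is to realize $G$ first as the automorphism group of a decorated Cayley graph, and then to repair that construction so that it becomes a connected, locally finite, undirected simple graph without enlarging the automorphism group. First I would fix a countable (possibly infinite) generating set $S=\{s_1,s_2,\dots\}$ of $G$ and form the Cayley colour digraph $\G_0$ on the vertex set $G$, placing a directed edge of colour $i$ from $x$ to $xs_i$. Left translations $x\mapsto gx$ are colour- and orientation-preserving automorphisms of $\G_0$, and since $S$ generates $G$ any such automorphism $\varphi$ satisfies $\varphi(x)=\varphi(e)x$ for all $x$, so it is precisely a left translation; hence the automorphism group of $\G_0$ as a coloured digraph is exactly $G$. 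The whole difficulty is that $\G_0$ is in general neither undirected, nor uncoloured, nor locally finite.

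The orientation and the colours are removed by Frucht's gadget technique: each coloured directed edge is replaced by a small rigid graph (for instance a short path carrying a pendant tree whose shape encodes the colour $i$ and whose asymmetry encodes the direction), chosen so that its only colour-respecting automorphism fixing its two endpoints is the identity. Local finiteness is the genuinely new point, because when $S$ is infinite every vertex of $\G_0$ meets infinitely many edges, and no redistribution of edge-gadgets attached at the vertex itself can cure this. The key device is to refuse to attach all of these edges at the vertex $x$: instead I would grow from each $x$ an infinite ray $x=p_{x,0}-p_{x,1}-p_{x,2}-\cdots$, and hang at the node $p_{x,i}$ only the two gadgets corresponding to the colour-$i$ edges incident with $x$, namely the one leaving $x$ towards $xs_i$ (joined to $p_{xs_i,i}$) and the one entering $x$ from $xs_i^{-1}$. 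Every node of every ray then carries a bounded number of gadget-attachments, so the resulting graph $\G$ is locally finite; attaching to $p_{x,i}$ an auxiliary rigid tree whose shape depends only on $i$ both records the value of $i$ and pins the ray in place. Connectedness follows from connectedness of the Cayley graph (as $S$ generates $G$) together with the attached rays.

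The main obstacle, and the part demanding care, is to verify that no spurious automorphisms have been introduced, i.e.\ that $\Aut(\G)\cong G$ rather than something strictly larger. Here I would argue that the rigidity of the edge-gadgets and of the position-tags forces any $\varphi\in\Aut(\G)$ to send core vertices to core vertices, rays to rays respecting their tagged enumeration, and colour-$i$ gadgets to colour-$i$ gadgets with their orientation preserved. Consequently $\varphi$ restricts to a colour- and orientation-preserving automorphism of $\G_0$, which by the first paragraph is a left translation; conversely every left translation visibly extends over the rays, tags and gadgets. This yields the isomorphism $\Aut(\G)\cong G$. Finally, for finite $G$ one takes $S$ finite, so the rays may be truncated to finite paths (or dispensed with, as in Frucht's original argument), leaving $\G$ finite, as required.
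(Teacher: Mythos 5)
The paper does not actually prove this statement: it is imported verbatim as \cite[Theorem 4]{Hell-N}, so there is no internal proof to compare yours against. Judged on its own, your outline is the standard Frucht--Sabidussi strategy (Cayley colour digraph, then rigid edge-gadgets to erase colours and orientations), and your one genuinely new device --- refusing to attach all colour classes at the vertex itself and instead distributing the colour-$i$ gadget-ends along the $i$-th node of a tagged ray grown from each vertex --- is exactly the right kind of fix for local finiteness when $G$ is not finitely generated. The architecture is sound and the finite case degenerates correctly to Frucht's theorem.

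Two places still need real care before this is a proof rather than a plan. First, with infinitely many colours you need an infinite family of pairwise non-isomorphic rigid gadgets \emph{of uniformly bounded degree} (e.g.\ pendant paths of distinct prescribed lengths attached at prescribed positions), and you must choose the gadget shapes, the ray-position tags, and the core-vertex marking from mutually ``non-confusable'' families, so that an automorphism of the final uncoloured graph cannot carry a gadget-internal vertex to a ray vertex, a colour-$i$ gadget to a colour-$j$ gadget, or reverse the implicit orientation; your sentence ``the rigidity \ldots forces'' is the entire content of the theorem and is asserted rather than argued. Second, there are the usual degenerate cases: involutive generators $s_i=s_i^{-1}$ produce a doubled colour-$i$ edge between $p_{x,i}$ and $p_{xs_i,i}$ whose two gadgets must not create a swap, and the very small groups ($|G|\le 2$) classically require separate treatment in Frucht-type constructions. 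For what it is worth, the cited source of Hell and Ne\v{s}et\v{r}il proves a sharper statement (realization by regular graphs of bounded degree), which requires considerably more delicate gadgeteering than the locally finite version you aim at; your route is the more elementary one and suffices for the statement as quoted in this paper.
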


We now introduce the following concept appearing in the definition of the class $\Cc$:
\begin{definition}\label{def:co-hopf}
A group is said to be co-Hopfian if it is not isomorphic to any of its proper subgroups. 
\end{definition}
 A straightforward characterization of the co-Hopfian property is that of  a group $G$ for whom every injective endomorphism is an automorphism.
It is clear that every finite group is co-Hopfian, though the class of co-Hopfian groups is larger, as the reader can check at the list of examples. We finally introduce our class  of groups:
\begin{definition}\label{def:C}
We say that a group $G$ is in the class $\Cc$ if the following three conditions hold:
\begin{itemize}
\item[i)] There exists a connected locally finite graph $\G$ such that $G\cong\Aut(\G)$.
\item[ii)] $G$ is co-Hopfian.
\item [iii)] Every abelian normal subgroup of $G$ is torsion.
\end{itemize}
\end{definition}

\begin{example}[Finite groups are in $\Cc$]\label{ex:finite}
Given a finite group $G$, we have:
\begin{itemize}
\item[i)] $G$ is countable, and according to Theorem \ref{th:groups&graphs}, there exists a graph $\G$ such that $G\cong\Aut(\G)$.
\item[ii)] $G$ is co-Hopfian since the only subgroup of $G$ of order $|G|$ is $G$.
\item[iii)] $G$ is torsion, and therefore any abelian normal subgroup of $G$ is torsion.
\end{itemize}
\end{example}

\begin{example}[Tarski groups are in $\Cc$]\label{ex:Tarski}
Recall that an infinite group $G$ is a Tarski group for the prime $p$, if  every non-trivial proper subgroup of $G$ is of order $p$.  Tarski groups are known to exist for big enough primes \cite{ol1, ol2}. Then:
\begin{itemize}
\item[i)] $G$ is finitely generated ($G$ is generated by any two non commuting elements), and therefore countable. By Theorem \ref{th:groups&graphs}, there exists a connected locally finite graph $\G$ such that $G\cong\Aut(\G)$.
\item[ii)] $G$ is co-Hopfian since proper subgroups of $G$ are finite, while $G$ is not.
\item[iii)] $G$ is simple, and therefore there is no abelian normal proper subgroup of $G$.
\end{itemize}
\end{example}
Previous examples have in common that they are finitely generated. This is not a necessary condition as  we now illustrate.

\begin{example}[Solvable Artinian groups are in $\Cc$]{\label{ex:artin}}
Recall that a group $G$ is an Artinian group if it satisfies the minimal condition, that is, if every strictly descending chain of subgroups of $G$, $G_1>  G_2 > G_3 > \cdots$, is finite.
Then:
\begin{itemize}
\item[i)]  $G$ is a countable (see \cite[p.\ 192]{Kurosh}).  Hence, by Theorem \ref{th:groups&graphs}, there exists a connected locally finite graph $\G$ such that $G\cong\Aut(\G)$.
\item[ii)] The co-Hopfian property is immediatly obtained from the minimal condition.
\item[iii)] $G$ is torsion \cite[Theorem 1.8.2]{Hall}. Hence, every abelian normal subgroup is torsion too.
\end{itemize}
An example in this class is the Pr\"ufer $p$-group, $p$ a prime, defined to be the Sylow $p$-group of $\mathbb Q / \mathbb Z$, that is the set of elements
of $\mathbb Q/ \mathbb Z$ whose order is a power of p.  Pr\"ufer groups are not finitely generated, in contrast with the last two examples who also were Artinian countable groups. Notice that the class of solvable Artinian groups is larger, since  it contains all the extensions of abelian Artinian groups by finite solvable groups.
\end{example}
\begin{example}[Mapping class groups of some compact surfaces are in $\Cc$]\label{ex:mapclass}
Let $S$ be a connected compact orientable surface. Suppose that $S$ is not a sphere with at most four holes,  nor a torus with at most two holes. Let $G$ be the mapping class group of $S$ \cite{Ivanov}. Then:
\begin{itemize}
\item[i)] $G$ is finitely generated \cite{Dehn2}, and therefore countable. Hence, by Theorem \ref{th:groups&graphs}, there exists a connected locally finite graph $\G$ such that $G\cong\Aut(\G)$.
    
\item[ii)] $G$ is co-Hopfian according to \cite[Theorem 1]{coHopf-Teichm} (if $S$ has positive genus) and \cite[Theorem 1.2]{coHpof-Braid} (if $S$ is a punctured sphere).
    
\item[iii)] $G$ does not contain any non trivial torsion-free abelian normal subgroup. Indeed, if $H\leq G$ is a torsion-free abelian normal subgroup, then $H$ contains only reducible or pseudo-Anosov classes \cite[Section 13.2]{Farb-Margalit}. Assume there exists a pseudo-Anosov class $x\in H$, and let $\F=(\F^u,\F^s)$ be a pair of transverse measured foliations associated to (preserved by) $x$ \cite[Section 13.2.3]{Farb-Margalit}. Then $H\leq C_G(x)\leq\{y\in G : y(\F)=\F\}$ \cite[Lemma 1.(3)]{McCarthy}. Since $H$ is normal, $yxy^{-1}\in H$ for every $y\in G$, that is, $yxy^{-1}(\F)=\F$. But we also have that $yxy^{-1}(y(\F))=y(\F)$, and therefore $\F=y(\F)$ (up to a Whitehead move), thus $y\in H$ what would imply that $G = H$ is abelian! Therefore $H$ cannot contain any pseudo-Anosov class. Assume now there exists a reducible class $x\in H$, and let $C=\{c_i\}$ be its canonical reduction system \cite[Section 13.2.2]{Farb-Margalit}. Now given $y\in G$, $yxy^{-1}$ is another reducible class whose canonical reduction system is $y(C)$. Since $H$ is normal, $yxy^{-1}\in H$ thus $x$ and $yxy^{-1}$ commute and a straightforward calculation shows that $x$ preserves de reduction system $y(C)$. But $y$ can be chosen such that $y(C)$ intersects non trivilly with $C$, what is a contradiction with the fact that $C$ is a canonical reduction system. Hence $H$ cannot contain any reducible element, and $H$ must be trivial. 
\end{itemize}
\end{example}

\section{From automorphisms of graphs to automorphisms of DGA's}
This section relies heavily on previous results of the authors  in \cite[Section 2]{CV2}. In that previous work, for a given finite graph $\G$, we construct a finite-type DGA, $(A,d)$, whose automorphisms group is closely related to the automorphism group of the graph. In the present work, for our purpose, we do not need the finite-type condition, therefore we extend our construction to locally finite graphs. Following notation in \cite{FHT}, we prove the following theorem.
 \begin{theorem}\label{2}
Let $\G=(V,E)$ be a connected locally finite graph, then there exists a DGA $(A,d)$ such that $$\Aut(A,d) \cong K\rtimes\Aut(\G)$$ where $K$ is an abelian torsion-free group.
\end{theorem}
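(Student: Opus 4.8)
The plan is to adapt the construction of \cite{CV2} from finite to locally finite graphs and then to read the automorphism group directly off the resulting Sullivan algebra. Recall that in \cite{CV2} one attaches to a finite graph a finite-type commutative DGA by introducing one \emph{rigid} generator (all in a single even degree) per vertex, together with auxiliary generators of carefully chosen distinct degrees that record the edges through the differential; the degrees are arranged so that the only ambiguity left among the vertex generators is a permutation, and the differential then forces that permutation to respect the edges. I would keep exactly this recipe. Concretely, for $\G=(V,E)$ I take $A$ to be the free graded-commutative $\Q$-algebra on generators indexed by $V$ and by $E$, together with the auxiliary generators of \cite{CV2}, and I define $d$ on each generator by the same local formula as in the finite case, so that the differential of the generator attached to a vertex $v$ involves only the generators attached to the edges incident to $v$.

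The first point to verify is that the construction still makes sense. Since $\G$ is locally finite, each vertex meets only finitely many edges, so the differential of every generator is a finite sum and $d$ extends uniquely to a derivation of $A$. The identity $d^2=0$ is checked generator by generator, and for each generator involves only finitely many terms, exactly as in \cite{CV2}; thus passing to an infinite vertex set introduces nothing new, and $(A,d)$ is a bona fide (no longer finite-type) DGA over $\Q$ in the sense of \cite{FHT}. Next I would compute $\Aut(A,d)$. On the one hand, every $\sigma\in\Aut(\G)$ permutes the vertex- and edge-generators compatibly with $d$ and hence induces $\phi_\sigma\in\Aut(A,d)$; the assignment $\sigma\mapsto\phi_\sigma$ is an injective homomorphism and will serve as a splitting. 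On the other hand, the degree bookkeeping forces any $\phi\in\Aut(A,d)$ to act on the span of the vertex generators, modulo decomposables, by a permutation matrix; this produces a permutation of $V$, and compatibility of $\phi$ with $d$ forces that permutation to preserve $E$, i.e. to lie in $\Aut(\G)$. This yields a short exact sequence $1\to K\to\Aut(A,d)\to\Aut(\G)\to1$, split by $\sigma\mapsto\phi_\sigma$.

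It then remains to identify $K$, the subgroup of automorphisms that are the identity on the rigid generators and differ from the identity only by adding decomposable correction terms, of the appropriate degree, to the remaining generators. The degrees are chosen so that two such corrections never multiply back into the relevant range, whence composition in $K$ is simply addition of the correction data. Therefore $K$ is isomorphic to the additive group of a (now possibly infinite-dimensional) $\Q$-vector space, which is abelian and torsion-free, and we conclude $\Aut(A,d)\cong K\rtimes\Aut(\G)$.

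The main obstacle is the rigidity claim used above: one must show that the chosen degrees leave no room for \emph{exotic} automorphisms beyond permutations of the same-degree vertex generators and the unipotent corrections in $K$. The delicate part is pinning down the scalars, since a priori $\phi$ could send a vertex generator to a \emph{nonzero} scalar multiple of another, and free scalars would contribute torsion (and a non-torsion-free factor), contradicting the shape of the statement; the nonlinear differential relations must be shown to force every such scalar to be $1$. Fortunately an automorphism of $A$ is determined by its values on generators, and by local finiteness each such value is constrained by only finitely many differential relations, so the verification reduces to the same finite, local computation as in \cite{CV2}. The only genuinely new point over the finite case is to confirm that $K$, although possibly infinite-dimensional, remains abelian and torsion-free, and this follows from the same degree argument that guarantees the corrections do not interact.
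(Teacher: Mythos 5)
Your proposal follows essentially the same route as the paper: extend the construction of \cite{CV2} verbatim, observe that local finiteness makes each differential a finite sum (the only new point), read off the split exact sequence $K\to\Aut(A,d)\to\Aut(\G)$ from the action on indecomposables, and show $K$ is torsion-free abelian by a degree argument on the unipotent corrections. The only cosmetic difference is that you describe generators indexed by edges, whereas the paper uses two generators per vertex and encodes the edges purely in the differential $d(z_v)=x_v^3+\sum_{[v,w]\in E}x_vx_wx_2^4$; this does not affect the structure of the argument, and like the paper you defer the rigidity computation to \cite[Section 2]{CV2}.
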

\begin{proof}
Let $(A,d)=\big(\Lambda(x_1,x_{2},y_1, y_2, y_3, z)\otimes\Lambda(x_v, z_v\vert
v\in V),d\big)$ be the free commutative 
graded algebra on $(2 \vert V \vert + 6)$ generators in dimensions:
$$\vert x_1\vert = 8,\; 
\vert x_2\vert = 10,\; 
\vert y_1\vert = 33,\; 
\vert y_2\vert = 35,\; 
\vert y_3\vert = 37,\; 
\vert z\vert = 119,\; 
\vert x_v\vert = 40,\; 
\vert z_v\vert = 119,$$
and  differentials:
\begin{equation*}
\begin{array}{ll}
d(x_1)=&0\\
d(x_2)=&0\\
 d(y_1)=&x_1^3x_2\\
d(y_2)=&x_1^2x_2^2\\
d(y_3)=&x_1x_2^3\\
d(x_v)=&0\\
d(z) =&y_1y_2x_1^4x_2^2-y_1y_3x_1^5x_2+y_2y_3x_1^6+x_1^{15}+x_2^{12}\\
d(z_v)=&x_v^3+\sum_{[v,w]\in E}x_vx_wx_2^4.
\end{array}
\end{equation*}
Observe that $(A,d)$ is well defined since $\G$ is locally finite and therefore the sum defining the differential of  elements $z_v$ is, indeed,  finite. According to \cite[Section 2]{CV2}, given $f \in\Aut(A,d)$, there exists  $\sigma \in \Aut (\G)$  such that:
\begin{equation*}
\begin{array}{rl}
f(x_1) =& x_1\\
f(x_2) =& x_2\\
f(y_1) =& y_1\\
f(y_2) =& y_2\\
f(y_3) =& y_3\\
f(x_v)=&  x_{\sigma(v)} \\
f(z) =& z + d(m_z)\\
f(z_v)=& z_{\sigma(v)}  +d(m_{z_v})
\end{array}
\end{equation*}
with  $\vert m_z \vert = \vert m_{z_v} \vert = 118$ elements in $A$.  Therefore, the projection over the module of indecomposable elements of $A$ provides an split exact sequence of groups
$$K\longrightarrow\Aut(A,d)\longrightarrow\Aut(\G)$$
where $f \in K$ if and only if
\begin{equation*}
\begin{array}{rl}
f(x_1) =& x_1\\
f(x_2) =& x_2\\
f(y_1) =& y_1\\
f(y_2) =& y_2\\
f(y_3) =& y_3\\
f(x_v)=&  x_{v} \\
f(z) = & z + d(m_z)\\
f(z_v)= & z_{v} + d(m_{z_v})
\end{array}
\end{equation*}
with $\vert m_z \vert = \vert m_{z_v} \vert = 118$ elements in
$A$. Hence, by degree reasons,  $f(m_z)=m_z$ and $f(m_{z_v})=m_{z_v}$.
These two equalities force the following  ones: $f^n(z) = z + nd(m_z)$ and $f^n(z_v) = z_v + nd(m_{z_v})$,  for any integer  $n$. Hence,  $f^n\ne 1$ for $n\ne 0$ and we can conclude that $K$ is torsion-free. 
Now, to prove that $K$ is abelian, a direct calculation shows that $[f,g] =1$.

\end{proof}

\section{Proof of Theorem \ref{main}}\label{sec:main}
One implication is immediate. Assume now that for every DGA, $(A,d)$, $G\leq\Aut(A,d)$ if and
only if $H\leq\operatorname{Aut}(A,d)$. Since $G$ and $H$ are in $\Cc$, there exist connected locally finite graphs $\G_G$ and $\G_H$ such that $G\cong \Aut(\G_G)$ and $G\cong \Aut(\G_G)$. Therefore, according to Theorem \ref{2}, there exists DGA's $(A_G,d)$ and $(A_H,d)$ such that $\Aut(A_G,d)\cong K_1\rtimes G$ and $\Aut(A_H,d)\cong K_2\rtimes H$, where $K_1$ 
and $K_2$ are abelian torsion-free groups.

Now we use that, since $G\leq\Aut(A_G,d)$, by hypothesis $H\leq\Aut(A_G,d)$.  As $K_1$ is an abelian normal subgroup of $\Aut(A_G,d)$, then $H\cap K_1$  is an abelian normal subgroup of $H$, and therefore it must be torsion because $H\in\Cc$.  But $K_1$ is torsion-free, hence $H\cap K_1$ must be trivial. Thus, the inclusion $H\leq K_1\rtimes G$ leads to an inclusion $H\leq G$. A similar argument using that $H\leq\Aut(A_H,d)$ leads to an inclusion $G\leq H$. 

Therefore, we have a series of inclusions $G\leq H\leq G$, and since $G$ is co-Hopfian, we have that $G \cong H$, what concludes the proof.

\begin{remark}
Notice that the proof of Theorem \ref{main} only uses the co-Hopfian property of one of the groups involved. Therefore, it would be possible to enlarge the class $\Cc$ by dropping the co-Hopfian requirement. It should be then added as an hypothesis in the theorem, for at least one of the groups.
\end{remark}

\noindent{{\bf{Acknowledgements.}} The authors want to thank Rosa Mar\'ia Fern\'andez-Rodr\'iguez and  Juan Gonz\'alez-Meneses for
fruitful conversations and suggestions.

\end{document}